\documentclass[a4paper,11pt]{amsart}
\usepackage{hyperref,latexsym}
\usepackage{enumerate}
\usepackage{graphicx}
\usepackage{float}
\theoremstyle{plain}
\newtheorem{theorem}{Theorem}[section]
\newtheorem{lemma}[theorem]{Lemma}

\theoremstyle{definition}

\theoremstyle{remark}

\begin{document}

\title[Outer billiards]
      {Outer billiards with the dynamics of a standard shift on a finite number of invariant curves}

\date{September 2018}
\author{Misha Bialy, Andrey E. Mironov, Lior Shalom }
\address{M. Bialy, School of Mathematical Sciences, Tel Aviv
University, Israel} \email{bialy@post.tau.ac.il}
\address{A.E. Mironov, 	Novosibirsk State University,
	Pirogova st 1, and Sobolev Institute of Mathematics,
	4 Acad. Koptyug ave., 630090 Novosibirsk, 
 Russia}
\email{mironov@math.nsc.ru}\address{L. Shalom, School of Mathematical Sciences, Tel Aviv
	University, Israel}\email{shalom@mail.tau.ac.il}
\thanks{M.B. and L.S were supported in part by ISF grant 162/15 and A.E.M. was supported by the Laboratory of Topology and Dynamics, Novosibirsk State University (contract no. 14.Y26.31.0025 with the Ministry of Education and Science of the Russian Federation).
	 It is our pleasure to thank
	these funds for the support}

\subjclass[2010]{} \keywords{{Outer billiards, Gutkin billiards, Total integrability, Chaotic behavior}}

\begin{abstract} We give a beautiful explicit example of a convex plane curve such that the outer billiard has a given finite number of invariant curves. Moreover, the dynamics on these curves is a standard shift. This example can be considered as an outer analog of the so-called Gutkin billiard tables. We test total integrability of these billiards,  in the region between the two invariant curves. Next, we provide computer simulations on the dynamics in this region. At first glance, the dynamics looks regular but by magnifying the picture we see components of chaotic behavior near the hyperbolic periodic orbits. We believe this is a useful geometric example for coexistence of regular and chaotic behavior of twist maps. 
\end{abstract}

\maketitle

%%%%%%%%%%%%%%%%%%%%%%%%%%%%%%%%%%%%%%%%%%%%%%%%%%%%%%%%%%%%%%%%%%%%%%%%%%

%%%%%%%%%%%%%%%%%%%%%%%%%%%%%%%%%%%%%%%%%%%%%%%%%%%%%%%%%%%%%%%%%%%%%%%%%%

%%%%%%%%%%%%%%%%
\section{\bf Outer billiards} Let us remind first the model of outer billiard.
Consider a closed strictly convex oriented planar curve $\gamma$. The outer billiard transformation $T$ acts on $\Omega$ --- the vicinity of $\gamma$ by the rule:
A point $A$ is mapped to $ T(A)$ iff the segment $[A,T(A)]$ is tangent to $\gamma$ precisely at the middle of $[A,T(A)]$ and has positive orientation at the tangency point (Fig.\ref*{fig:outer}). 

\begin{figure}[h]
	\centering
	\includegraphics[width=9cm]{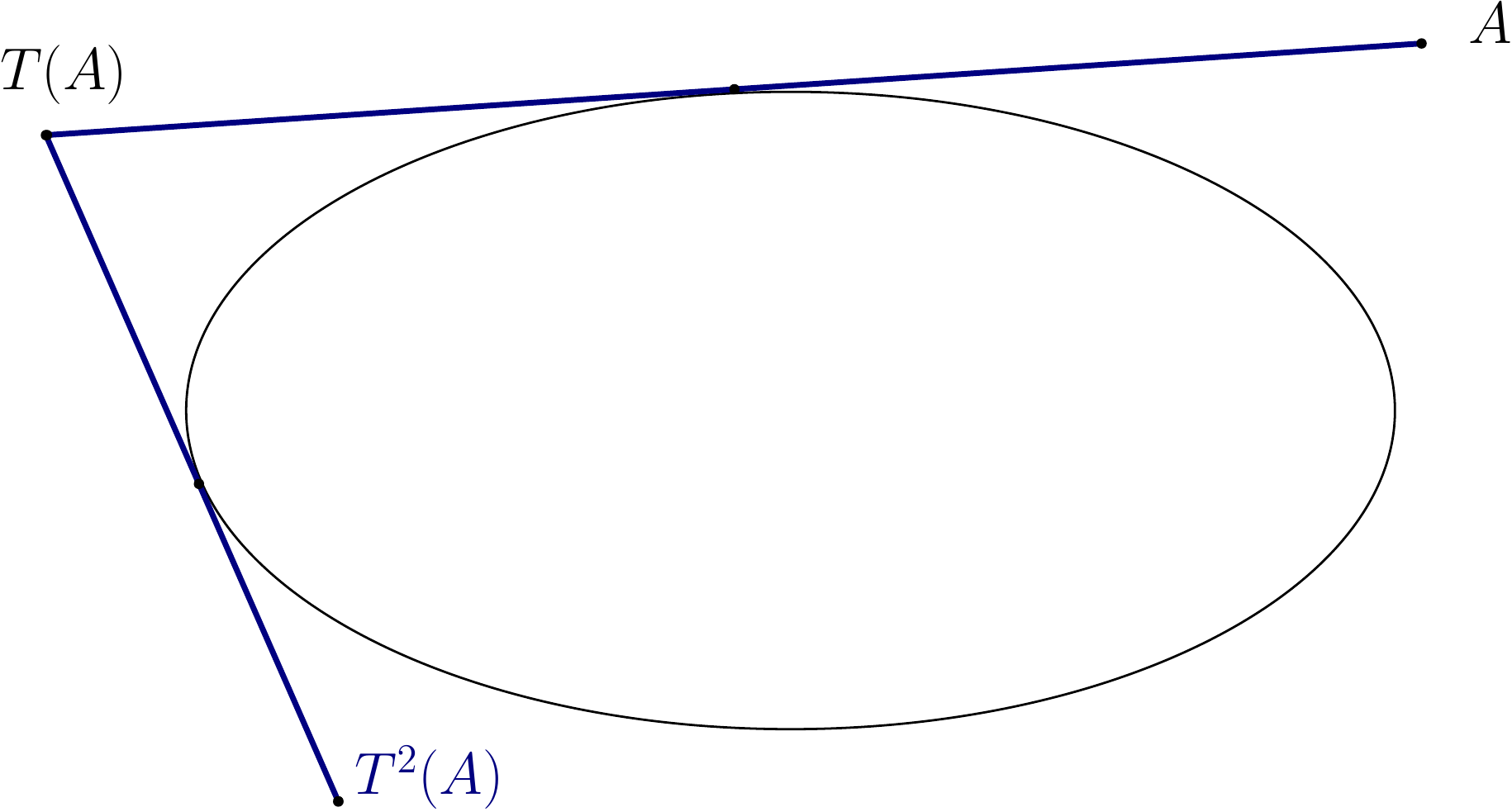}
	\caption{Outer billiard}
	\label{fig:outer}
\end{figure}

This map is a diffeomorphism of $\Omega$ preserving the standard area form (we refer to \cite{Boyland}, \cite{T2} for more details on outer billiards).
It is an open problem if there exist smooth convex closed curves other than ellipses which demonstrate integrable behavior.
The first result was obtained in \cite{Tab} and was completely settled in \cite {gl-sh}. It follows from these results that the answer is No if one restricts to polynomial integrals. It is not known however if the restriction of polynomiality  can be relaxed. Also, the following question is still open: 
Do there exist totally integrable outer billiards other than ellipses. Here, total integrability means the existence of a foliation of $\Omega$ consisting of invariant curves of $T$ running around $\gamma$.
In this paper we suggest a remarkable curve $\gamma$ such that the outer billiard demonstrates both regular and chaotic behaviors. For this curve we find some invariant curves and perform computer experiments. Recently an interesting "locally"
integrable behavior was numerically observed for a certain Birkhoff billiard in \cite{tresch}.

%%%%%%%%%%%%%

Let us choose now a regular parameter $t$  on $\gamma$. With this choice we have natural coordinates $(t,\lambda)$ in $\Omega$. Namely $(t,\lambda), \lambda>0$ corresponds to the point $$(t,\lambda), \lambda>0\mapsto
M=\gamma(t)+\lambda \dot\gamma (t).
$$
In these coordinates the area form takes the form
$$
\omega=[\dot\gamma(t),\ddot\gamma(t)]\lambda d\lambda\wedge dt,
$$
so the pair $(p,t), p=[\dot\gamma(t),\ddot\gamma(t)]\lambda^2/2$ form a symplectic pair.
Here and below the brackets $[\cdot,\cdot]$ denote the determinant of two vectors.
Moreover, the choice of parameter $t$ enables us to use a generating function 
which computes the area of the cup body: $$S(t,s)=Area({\rm \textit{Conv}}(M(t,s)\cup\gamma)),$$ see Fig. 1, where $Conv$ is the convex hull.
This is a generating function of $T$ because an easy computation of partial derivatives of $S$ gives the following formulas (here and below the subindex indicate partial derivatives):
\begin{equation}\label{s1s2}
S_1= - [\dot\gamma(t),\ddot\gamma(t)]\lambda^2/2,\quad S_2= [\dot\gamma(s),\ddot\gamma(s)]\mu^2/2.
\end{equation}
Notice that it follows from the definitions that $\frac{\partial\lambda}{\partial s}>0$, hence for the cross derivative we get from (\ref{s1s2}):
\begin{equation}\label{twist}
S_{12}<0,
\end{equation}
which is the so-called twist condition. Consider now a point $M\in\Omega$. We can write $M$ in two ways, see Fig. \ref{1}:
$$
M=\gamma(t)+\lambda\dot\gamma (t)=\gamma(s)-\mu \dot\gamma (s);\ \lambda,\mu>0.
$$
One can easily find:
\begin{equation}\label{mu}
\lambda=\frac{[\gamma(s)-\gamma(t),\dot\gamma(s)]}{[\dot\gamma(t),\dot\gamma(s)]},\quad \mu=\frac{[\gamma(t)-\gamma(s),\dot\gamma(t)]}{[\dot\gamma(t),\dot\gamma(s)]}.
\end{equation}
The following claim follows immediately from the definitions:
\begin{lemma} \label{lambda}
	For a positive constant $r$ let $\gamma_r$ be defined by 
	$$
	\gamma_r(t)=\gamma(t)+r\dot\gamma(t).
	$$
	Then $\gamma_r$ stays invariant under the outer billiard map $T$ if and only if for any point $M\in\gamma_r$ 
	\begin{equation}\label {constant}
	\mu(M)=r.
	\end{equation}
	\end{lemma}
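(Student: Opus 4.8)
The plan is to make the action of $T$ on $\gamma_r$ completely explicit and then to read off condition (\ref{constant}) from the uniqueness of the two tangent representations of a point. The starting point is the observation that the outer billiard map is simply reflection in the selected tangency point. Writing a point $M$ in its two forms $M=\gamma(t)+\lambda\dot\gamma(t)=\gamma(s)-\mu\dot\gamma(s)$ with $\lambda,\mu>0$ as in (\ref{mu}), the orientation convention in the definition of $T$ singles out the tangency point $\gamma(s)$, since reflecting in it gives $T(M)-M=2\mu\dot\gamma(s)$, a positive multiple of $\dot\gamma(s)$, whereas reflecting in $\gamma(t)$ would produce $-2\lambda\dot\gamma(t)$, a negative multiple of $\dot\gamma(t)$. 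Thus I would first establish the closed formula
\[
T(M)=2\gamma(s)-M=\gamma(s)+\mu\,\dot\gamma(s),\qquad T^{-1}(M)=\gamma(t)-\lambda\,\dot\gamma(t),
\]
verifying the choice of $\gamma(s)$ directly from the orientation requirement.

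For the direction (\ref{constant}) $\Rightarrow$ invariance, suppose $\mu(M)=r$ for every $M\in\gamma_r$. Then each such $M$ has backward representation $M=\gamma(s)-r\dot\gamma(s)$, so the formula above yields $T(M)=\gamma(s)+r\dot\gamma(s)=\gamma_r(s)$, which again lies on $\gamma_r$. Hence $T(\gamma_r)\subseteq\gamma_r$, and since $T$ is a diffeomorphism and $\gamma_r$ is a compact curve, $T(\gamma_r)=\gamma_r$; that is, $\gamma_r$ is invariant.

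For the converse I would argue by uniqueness of the forward representation. Assume $\gamma_r$ is invariant and take any $M\in\gamma_r$ with backward representation $M=\gamma(s)-\mu\dot\gamma(s)$. The formula gives $T(M)=\gamma(s)+\mu\dot\gamma(s)$, which is precisely a forward representation of the point $T(M)$, with tangency parameter $s$ and forward distance $\mu$. Invariance forces $T(M)\in\gamma_r$, and by the very definition $\gamma_r(\tau)=\gamma(\tau)+r\dot\gamma(\tau)$ every point of $\gamma_r$ has forward distance equal to $r$. Since the forward representation of a point of $\Omega$ is unique, matching distances yields $\mu=r$, i.e. $\mu(M)=r$; as $M\in\gamma_r$ was arbitrary, (\ref{constant}) holds.

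The two facts underpinning the argument are where I expect the only genuine care to be needed: first, that the orientation convention really selects $\gamma(s)$, so that $T$ acts as reflection in it (this is exactly the sign bookkeeping behind (\ref{mu})); and second, the uniqueness of the forward and backward tangent representations of a point of $\Omega$, which rests on the strict convexity of $\gamma$ and is what promotes the equality of distances to $\mu=r$. Everything else reduces to the one-line reflection computation above, so I do not anticipate a serious obstacle beyond pinning down these conventions cleanly.
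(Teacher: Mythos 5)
Your proof is correct and is essentially the paper's own approach: the paper simply asserts that this lemma ``follows immediately from the definitions,'' and your argument---the reflection formula $T(M)=2\gamma(s)-M=\gamma(s)+\mu\dot\gamma(s)$ selected by the orientation convention, plus uniqueness of the forward tangent representation $M=\gamma(t)+\lambda\dot\gamma(t)$ on a strictly convex curve---is exactly the unwinding of those definitions that the paper leaves implicit. There is no gap; your care with the orientation bookkeeping and the compactness argument upgrading $T(\gamma_r)\subseteq\gamma_r$ to equality are both sound.
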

\begin{figure}[h]
	\centering
	\includegraphics[width=0.5\linewidth]{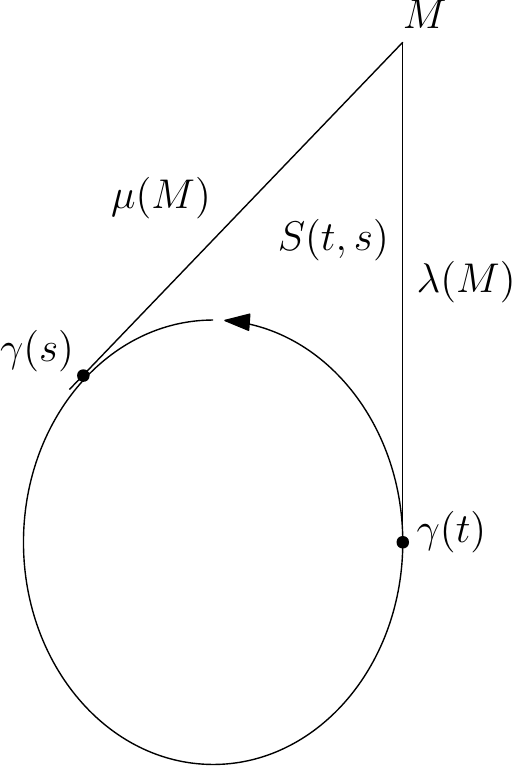}
	\caption{Outer billiard quantities}
	\label{1}
\end{figure}
\section{\bf Main Theorem}
Let us consider the following parametrized curve:
\begin{equation}\label{gam}
\gamma(t)=\left(\begin {array}{c}
x(t)\\
y(t)
\end{array}
\right)=\left(\begin {array}{c}
\cos t\\
\sin t
\end{array}
\right)+\epsilon \left(\begin {array}{c}
\cos nt\\
\sin nt
\end{array}
\right), \   t\in [0,2\pi],\ n\in \bf N.
\end{equation}
This is a perturbation of the  unit circle and hence is a closed convex curve at least for small positive $\epsilon$.

\begin{theorem}\label{main1}
Let $x \in (0,\pi/2)$ be any solution of the equation
\begin{equation}\label{gutkin}
\tan nx=n\tan x. 
\end{equation}
Set 
$$
r=\tan x
$$	
Then the curve $$
\gamma_r(t)=\gamma(t)+r\dot\gamma(t).
$$ is an invariant curve of billiard map $T$.
Moreover, the dynamics on this invariant curve is the standard shift:
$$
T(\gamma_r(t))=\gamma_r(s),\quad s=t+2x.
$$
\end{theorem}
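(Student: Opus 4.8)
The plan is to verify \emph{directly} that the outer billiard map sends $\gamma_r(t)$ to $\gamma_r(t+2x)$; this single computation establishes at once the invariance of $\gamma_r$ and the fact that the induced dynamics is the shift $t\mapsto t+2x$. By the very definition of $T$, it suffices to check two things about the chord $[\gamma_r(t),\gamma_r(t+2x)]$: that its midpoint lies on $\gamma$, and that the chord is tangent to $\gamma$ at that midpoint with the correct orientation. I would first observe that the natural candidate for the tangency point is $\gamma(t+2x)$, so that the whole statement reduces to the single identity
\[
\frac{\gamma_r(t)+\gamma_r(t+2x)}{2}=\gamma(t+2x).
\]
The point of isolating this identity is that tangency then comes \emph{for free}: since $\gamma_r(t+2x)=\gamma(t+2x)+r\dot\gamma(t+2x)$ by definition, the identity forces $\gamma_r(t+2x)-\gamma_r(t)=2r\dot\gamma(t+2x)$, so the chord is automatically parallel to $\dot\gamma(t+2x)$.

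Next I would carry out the trigonometric computation, writing $u(\theta)=(\cos\theta,\sin\theta)$ so that $\gamma(\theta)=u(\theta)+\epsilon\,u(n\theta)$ and $\dot u$ is the rotation of $u$ by $\pi/2$. Applying the sum-to-product formulas to $u(t)+u(t+2x)$, to $\dot u(t)+\dot u(t+2x)$, and to their $n$-frequency analogues, the left-hand side of the identity becomes
\[
\cos x\,u(t+x)+r\cos x\,\dot u(t+x)+\epsilon\big[\cos nx\,u(n(t+x))+rn\cos nx\,\dot u(n(t+x))\big].
\]
Writing the right-hand side in the same frames via the rotation identities $u(t+2x)=\cos x\,u(t+x)+\sin x\,\dot u(t+x)$ and $u(n(t+2x))=\cos nx\,u(n(t+x))+\sin nx\,\dot u(n(t+x))$ lets me match coefficients. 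The coefficients of $u(t+x)$ and of $u(n(t+x))$ match identically, while the coefficient of $\dot u(t+x)$ gives $r\cos x=\sin x$, i.e.\ $r=\tan x$, and the coefficient of $\dot u(n(t+x))$ gives $rn\cos nx=\sin nx$, i.e.\ $rn=\tan nx$. These two conditions hold simultaneously precisely when $r=\tan x$ together with $\tan nx=n\tan x$ --- which are exactly the hypotheses of the theorem, cf.\ \eqref{gutkin}.

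Having verified the midpoint identity, I would conclude as follows. The midpoint of $[\gamma_r(t),\gamma_r(t+2x)]$ equals $\gamma(t+2x)\in\gamma$, and by the remark above the chord direction is $2r\dot\gamma(t+2x)$, so the chord is tangent to $\gamma$ exactly at this midpoint; strict convexity guarantees that this is a genuine tangency rather than a transversal crossing, and $r=\tan x>0$ for $x\in(0,\pi/2)$ supplies the positive orientation. Hence $T(\gamma_r(t))=\gamma_r(t+2x)$ for every $t$, which is simultaneously the invariance of $\gamma_r$ and the standard shift $s=t+2x$. As a cross-check, the identity also yields $\gamma_r(t)=\gamma(t+2x)-r\dot\gamma(t+2x)$, so $\mu\equiv r$ on $\gamma_r$ and invariance follows from Lemma~\ref{lambda} as well. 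The computation has no serious analytic difficulty; the only real content is the organizational observation that both assertions collapse to one midpoint identity, and the bookkeeping that makes the frequency-$1$ and frequency-$n$ harmonics produce the two conditions $r=\tan x$ and $\tan nx=n\tan x$ separately.
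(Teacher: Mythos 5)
Your proof is correct, and its computational core is genuinely different from the paper's. Both arguments rest on the same geometric fact --- that for $s=t+2x$ the point $\gamma_r(t)$ can be rewritten as $\gamma(s)-r\dot\gamma(s)$, i.e.\ $\lambda=\mu=r$, which is exactly the condition of Lemma \ref{lambda} --- but you verify it in a different and considerably cleaner way. The paper works with the determinant formulas (\ref{mu}), reduces the claim to the identity $P_1+P_2\tan x=0$, and proves that identity by expanding in powers of $\epsilon$ (coefficients at $\epsilon^0,\epsilon^1,\epsilon^2$) and then in powers of $n$, which is several pages of trigonometry. You instead verify the single vector identity
$$
\frac{\gamma_r(t)+\gamma_r(t+2x)}{2}=\gamma(t+2x)
$$
exactly, for all $\epsilon$ at once, by writing $u(\theta)=(\cos\theta,\sin\theta)$ and decomposing both sides in the frames $u(t+x),\dot u(t+x)$ and $u(n(t+x)),\dot u(n(t+x))$: the frequency-$1$ harmonic yields $r\cos x=\sin x$ and the frequency-$n$ harmonic yields $rn\cos nx=\sin nx$, which together are precisely $r=\tan x$ and the Gutkin equation (\ref{gutkin}). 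This buys transparency: no perturbative expansion, no determinants, and it makes visible \emph{why} equation (\ref{gutkin}) appears --- it is exactly the compatibility condition on the frequency-$n$ part; it also packages invariance, tangency at the midpoint, and the shift $s=t+2x$ into one identity, whereas the paper routes tangency and invariance through Lemma \ref{lambda}. Two minor points of hygiene. First, since the four vectors $u(t+x),\dot u(t+x),u(n(t+x)),\dot u(n(t+x))$ live in the plane, ``matching coefficients'' is a sufficient condition, not a necessary one; your proof only needs sufficiency (the difference of the two sides equals $(r\cos x-\sin x)\,\dot u(t+x)+\epsilon\,(rn\cos nx-\sin nx)\,\dot u(n(t+x))$, which vanishes under the hypotheses), so your phrase ``precisely when'' overstates what is shown but creates no gap. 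Second, to conclude $T(\gamma_r(t))=\gamma_r(t+2x)$ from having \emph{exhibited} a positively oriented chord tangent at its midpoint, you implicitly use that such a chord from a given exterior point is unique for a strictly convex oval (so your candidate must be the image under $T$); you gesture at this via strict convexity, and it is the same implicit assumption the paper makes.
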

{\it Remarks.}

	1. For exactly this $\gamma$ the ordinary Birkhoff billiard was studied for $n=2$ in  \cite {robnik}, \cite {strelc}.
	In this paper we consider outer billiard for $\gamma$ for higher values of $n$.
	
	2. In the paper \cite{T3} outer billiard tables having invariant curves of 3-periodic points were studied. In the present paper the rotation number on all the invariant curves given by Theorem \ref{main1} are always incommensurate with $\pi$ as it was proved in \cite {van}. 

3. The curve $\gamma$ gives a remarkable example of outer billiard having a number of invariant curves such that the dynamics on each of them is a standard shift. With this respect, this serves as an analog of the so-called Gutkin billiard tables of ordinary Birkhoff billiards \cite{gu1}. 
\medskip

{\it Proof of Theorem \ref{main1}.}
 Take any $t$ and $s=t+2x$. We claim that formulas (\ref{mu}) yield: 
	\begin{equation}\label{tricky}
	\lambda=\mu= \tan x
	\end{equation}
	identically for all $t$. This claim would finish the proof of Theorem \ref{main1} in view of Lemma \ref{lambda}.
	
	The verification of (\ref{tricky}) uses the explicit formulas (\ref{mu})  and tricky manipulations with trigonometric formulas using the equation (\ref{gutkin}) as we turn to explain. Let
	$$
	x(t)=\cos(t)+\varepsilon \cos(nt)  \quad  y(t)=\sin(t)+\varepsilon \sin(nt)
	$$	
	and
	\begin{equation}
	P_1=[\gamma(t)-\gamma(s),\dot\gamma(s)]=\det \left(\begin {array}{cc}
	x(t)-x(s)  & y(t)-y(s)\\
	x'(s) & y'(s)
	\end{array}
	\right),
	\end{equation}
	\begin{equation}
	P_2=[\dot\gamma(t),\dot\gamma(s)]=\det \left(\begin {array}{cc}
	x'(t) & y'(t)\\
	x'(s) & y'(s)
	\end{array}
	\right).
	\end{equation}
	Equation (\ref{tricky}) follows from the following lemma.
	\begin{lemma} 
		If $s=t+2x$ and $x$ satisfies the equation  (\ref{gutkin}),
		then the identity holds 
		$$
		P_1+P_2\tan(x)=0.
		$$
	\end{lemma}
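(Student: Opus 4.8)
The plan is to reduce the claim to a purely trigonometric identity graded by powers of $\varepsilon$. Since $x\in(0,\pi/2)$ gives $\cos x\neq0$, proving $P_1+P_2\tan x=0$ is equivalent to proving $P_1\cos x+P_2\sin x=0$, and I would work with the latter to keep everything polynomial in $\sin$ and $\cos$. Write $\gamma=w_1+\varepsilon w_2$ and $\dot\gamma=v_1+\varepsilon n\,v_2$ with $w_1(t)=(\cos t,\sin t)$, $w_2(t)=(\cos nt,\sin nt)$, $v_1(t)=(-\sin t,\cos t)$, $v_2(t)=(-\sin nt,\cos nt)$. Expanding the two determinants multilinearly, each elementary bracket of two of these vectors is a single sine or cosine of a difference of arguments (for instance $[w_1(t),v_1(s)]=\cos(s-t)$ and $[v_1(t),v_1(s)]=\sin(s-t)$), so $P_1$ and $P_2$ become short trigonometric sums naturally split into parts of order $\varepsilon^0$, $\varepsilon^1$ and $\varepsilon^2$.

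The key simplification is to symmetrize about the midpoint: set $u=t+x$, so $t=u-x$ and $s=u+x$. Then every argument becomes a combination of $u$ and the fixed number $x$, and, crucially, the only $u$-dependence in the $\varepsilon$-linear parts enters through $\phi:=(n-1)u$ via $\cos\phi$ and $\sin\phi$. I expect
\[
P_2=\sin 2x+2\varepsilon n\sin((n+1)x)\cos\phi+\varepsilon^2 n^2\sin 2nx,
\]
and a similarly graded expression for $P_1$ whose $\varepsilon^0$-part is $\cos 2x-1$ and whose $\varepsilon^2$-part is $n(\cos 2nx-1)$.

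I would then verify $P_1\cos x+P_2\sin x=0$ order by order. The $\varepsilon^0$ term is $(\cos2x-1)\cos x+\sin2x\sin x=\cos x-\cos x=0$ by the addition formula, using no hypothesis. The $\varepsilon^2$ term factors as $2n\sin nx\,(n\cos nx\sin x-\sin nx\cos x)$ and vanishes because equation (\ref{gutkin}) is exactly the relation $\sin nx\cos x=n\cos nx\sin x$.

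The hard part is the $\varepsilon^1$ term, and this is where the real work lies. Both the $\varepsilon^0$- and $\varepsilon^2$-parts are independent of $u$, whereas the $\varepsilon^1$-part has the pure form $a(x)\cos\phi+b(x)\sin\phi$; since $\phi=(n-1)(t+x)$ varies with $t$ while $x$ is fixed, this contribution vanishes for all $t$ exactly when $a(x)$ and $b(x)$ vanish separately. Expanding the cosines of $\phi\pm(n\pm1)x$, collecting the $\cos\phi$ and $\sin\phi$ terms from $P_1\cos x$, and adding the single $\cos\phi$ term from $P_2\sin x$, I expect the $\cos\phi$-coefficient to reduce (after dividing by $2\sin x$) to $n\sin((n+1)x)-(n+1)\sin nx\cos x$, and the $\sin\phi$-coefficient (after dividing by $\cos x$) to $(n+1)\sin((n-1)x)-(n-1)\sin((n+1)x)$. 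Expanding $\sin((n\pm1)x)$ by the addition formula, each collapses to a multiple of $\sin nx\cos x-n\cos nx\sin x$, which is $0$ by (\ref{gutkin}). The bookkeeping point I would watch most carefully is the cancellation inside the $\cos\phi$-coefficient, where the $P_2\sin x$ term must combine exactly with the factor $\cos((n+1)x)-\cos((n-1)x)=-2\sin nx\sin x$ coming from $P_1\cos x$; once that is confirmed, all three graded pieces vanish and $P_1\cos x+P_2\sin x\equiv0$, which is the lemma.
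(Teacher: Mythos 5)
Your proposal is correct, and every intermediate formula you state checks out: $P_2=\sin 2x+2\varepsilon n\sin((n+1)x)\cos\phi+\varepsilon^2n^2\sin 2nx$; the $\varepsilon^0$- and $\varepsilon^2$-parts of $P_1$ are $\cos 2x-1$ and $n(\cos 2nx-1)$; the $\varepsilon^2$-coefficient of $P_1\cos x+P_2\sin x$ factors as $2n\sin nx\,(n\cos nx\sin x-\sin nx\cos x)$; and the $\cos\phi$- and $\sin\phi$-coefficients of the $\varepsilon^1$-part reduce, respectively, to $2\sin x\left[n\sin((n+1)x)-(n+1)\sin nx\cos x\right]$ and $\cos x\left[(n+1)\sin((n-1)x)-(n-1)\sin((n+1)x)\right]$, both of which collapse to multiples of $\sin nx\cos x-n\cos nx\sin x$, i.e.\ vanish exactly by (\ref{gutkin}). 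However, your route is genuinely different in execution from the paper's. The paper substitutes the Gutkin relation at the very start, in the rational double-angle form $\sin 2nx=\frac{2n\tan x}{1+n^2\tan^2x}$, $\cos 2nx=\frac{1-n^2\tan^2x}{1+n^2\tan^2x}$, plugs these into the coordinate expressions for $\gamma(s),\dot\gamma(s)$, and then verifies $P_1+P_2\tan x=0$ by expanding in powers of $\varepsilon$ and, inside the $\varepsilon^1$-coefficient, further sorting by powers of $n$ (the coefficients of $n^3$, $n^2$, $n$ are each shown to vanish by product-to-sum manipulations); the $\varepsilon^2$-coefficient then cancels by direct algebra. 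You instead keep $\sin 2nx,\cos 2nx$ symbolic, exploit bilinearity of the bracket so that every elementary term is a single sine or cosine of a difference, and symmetrize about the midpoint $u=t+x$; this exposes the structure the paper's computation obscures: the $\varepsilon^0$- and $\varepsilon^2$-parts are $u$-independent, the $\varepsilon^1$-part is a pure harmonic $a(x)\cos\phi+b(x)\sin\phi$ in $\phi=(n-1)u$, and the whole lemma is equivalent to the single clean identity $\sin nx\cos x=n\cos nx\sin x$, invoked once at the end. What each approach buys: the paper's is fully explicit in coordinates but long, and its cancellations (especially the power-of-$n$ bookkeeping) look miraculous; yours is shorter, avoids the rational substitution and the expansion in $n$ entirely, and explains \emph{why} only three $u$-independent relations need to be checked --- at the modest cost of the midpoint change of variables and the multilinear bracket decomposition as preliminary setup.
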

	
	\begin{proof}
		From (\ref{gutkin}) we have
		\[
		\sin(2nx)=\frac{2n\tan(x)}{1+n^2\tan^2(x)} \quad \cos(2nx)=\frac{1-n^2\tan^2(x)}{1+n^2\tan^2(x)}
		\]
		This allows us to express $x(s), y(s), x'(s), y'(s)$ by:
		\\
		$$
		x(s)=\cos(t+2x)+\varepsilon\cos(n(t+2x))=
		$$
		$$
		\cos(t+2x)+\varepsilon(\cos(nt)\cos(2nx)-\sin(nt)\sin(2nx))=
		$$
		$$
		\cos(t+2x)+\varepsilon\left(\cos(nt)\frac{1-n^2\tan^2(x)}{1+n^2\tan^2(x)}-\sin(nt)\frac{2n\tan(x)}{1+n^2\tan^2(x)}\right)
		$$
		\\
		$$
		y(s)=\sin(t+2x)+\varepsilon\sin(n(t+2x))=
		$$
		$$
		\sin(t+2x)+\varepsilon(\sin(nt)\cos(2nx)+\cos(nt)\sin(2nx))=
		$$
		$$
		\sin(t+2x)+\varepsilon\left(\sin(nt)\frac{1-n^2\tan^2(x)}{1+n^2\tan^2(x)}+\cos(nt)\frac{2n\tan(x)}{1+n^2\tan^2(x)}\right)
		$$
		\\
		$$
		x'(s)=-\sin(t+2x)-n\varepsilon\sin(n(t+2x))=
		$$
		$$
		-\sin(t+2x)-n\varepsilon(\sin(nt)\cos(2nx)+\cos(nt)\sin(2nx))=
		$$
		$$
		- \sin(t+2x)-n\varepsilon\left(\sin(nt)\frac{1-n^2\tan^2(x)}{1+n^2\tan^2(x)}+\cos(nt)\frac{2n\tan(x)}{1+n^2\tan^2(x)}\right)
		$$
		\\
		$$
		y'(s)=\cos(t+2x)+n\varepsilon\cos(n(t+2 x))=
		$$
		$$
		\cos(t+2x)+n\varepsilon(\cos(nt)\cos(2nx)-\sin(nt)\sin(2nx))=
		$$
		$$
		\cos(t+2x)+n\varepsilon\left(\cos(nt)\frac{1-n^2\tan^2(x)}{1+n^2\tan^2(x)}-\sin(nt)\frac{2n\tan(x)}{1+n^2\tan^2(x)}\right)
		$$
		\\
		\\
		Using the above identities, we get:
		\\
		$$
		P_1+P_2\tan(x)=
		$$
		$$
		y'(s)(x(t)-x(s)+x'(t)\tan(x))-x'(t)(y(t)-y(s)+y'(t)\tan(x))
		$$
		\\
		The coefficient at $\varepsilon^0$ in $P_1+P_2\tan(x)$ is:
		\\
		$$
		\cos(t+2x)(\cos(t)-\cos(t+2x)-\sin(t)\tan(x))+\sin(t+2x)(\sin(t)-\sin(t+2x)+\cos(t)\tan(x))=
		$$
		$$
		\cos(t+2x)\cos(t)+\sin(t+2x)\sin(t)-1-(\cos(t+2x)\sin(t)-\sin(t+2x)\cos(t))\tan(x)=
		$$
		$$
		\cos(2x)-1+\sin(2x)\tan(x)=0
		$$
		\\
		The coefficient at $\varepsilon^1$ in $P_1+P_2\tan(x)$ is:
		\\
		$$
		\cos(t+2x)\Big(\cos(nt)-n\sin(nt)\tan(x)+\frac{-\cos(nt)+2n\sin(nt)\tan(x)+n^2\cos(nt)\tan^2(x)}{1+n^2\tan^2(x)}\Big)+
		$$
		$$
		n\Big(\frac{-2n\sin(nt)\tan(x)+\cos(nt)(1-n^2\tan^2(x))}{1+n^2\tan^2(x)}\Big)\Big(\cos(t)-\cos(t+2x)-\sin(t)\tan(x)\Big)+
		$$
		$$
		\sin(t+2x)\Big(\sin(nt)+n\cos(nt)\tan(x )+ \frac{-\sin(nt)-2n\cos(nt)\tan(x)+n^2\sin(nt)\tan^2(x)}{1+n^2\tan^2(x)}\Big)+
		$$
		$$
		n\Big(\frac{2n\cos(nt)\tan(x)+\sin(nt)(1-n^2\tan^2(x))}{1+n^2\tan^2(x)}\Big)	\Big(\sin(t)-\sin(t+2x)+\cos(t)\tan(x)\Big) 
		$$
		\\
		\\
		We will observe the terms as coefficients to $n$ in some power. We have:
		$$
		\Big(\frac{1}{1+n^2\tan^2(x)}\Big)
		$$
		$$
		\Big(n^3\big[-\cos(t+2x)\sin(nt)\tan^3(x)-\cos(nt)\tan^2(x)(\cos(t)-\cos(t+2x)-\sin(t)\tan(x))+
		$$
		$$
		\ \ \ \ \ \ \ \sin(t+2x)\cos(nt)\tan^3(x)-\sin(nt)\tan^2(x)(\sin(t)-\sin(t+2x)+\cos(t)\tan(x))\big]+
		$$
		$$
		n^2\big[2\cos(t+2x)\cos(nt)\tan^2(x)+2\sin(t+2x)\tan^2(x)\sin(nt)-2\sin(nt)\tan(x)(\cos(t)-
		$$
		$$
		\ \ \ \ \ \ \  \cos(t+2x)-\sin(t)\tan(x))+2\cos(nt)\tan(x)(\sin(t)-\sin(t+2x)+\cos(t)\tan(x))\big]+
		$$
		$$
		n\big[\cos(nt)\cos t+\sin(nt)\sin t-\cos (t+2x)\cos(nt)-\sin(t+2x)\sin(nt)+
		$$
		$$
		\ \ \ \ \ \ \ \ \ \tan(x)(\sin(nt)\cos t-\cos(nt)\sin t+\cos (t+2x)\sin(nt)-\sin(t+2x)\cos(nt))\big] \Big)
		$$
		\\
		All of the coefficients at $n^3, n^2, n^1$ in the previous expression are 0. \\
		Indeed, the coefficient at $n^3$ reads:
		\\
		$$
		\tan^2(x)(\sin(t+2x)\sin(nt)+\cos(t+2x)\cos(nt)-\sin(nt)\sin t-\cos(nt)\cos t )+
		$$
		$$
		\tan^3(x)(\sin(t+2x)\cos(nt)-\cos(t+2x)\sin(nt)+\cos(nt)\sin t-\sin(nt)\cos t)=
		$$
		$$
		\tan^2(x)(\cos(2x+t-nt)-\cos(t-nt))+\tan^3(x)(\sin(t-nt+2x)+\sin(t-nt))=
		$$
		$$
		\tan^3(x)(2\cos(x)\sin(t-nt+x))-\tan^2(x)(2\sin(x)\sin(t-nt+2x))=0
		$$
		\\
		Coefficient at $n^2$ reads:
		\\
		$$
		2\tan(x)(\cos(nt)\sin t-\sin(nt)\cos t+\cos (t+2x)\sin(nt)-\sin(t+2x)\cos(nt))+
		$$
		$$
		2\tan^2(x)(\cos(nt)\cos t+\sin(nt)\sin t+\cos(t+2x)\cos(nt)+\sin(t+2x)\sin(nt))=
		$$
		$$
		2\tan(x)(\sin(t-nt)-\sin(t-nt+2x))+2\tan^2(x)(\cos(t-nt)+\cos(t-nt+2x)))=
		$$
		$$
		-4\tan(x)\sin x\cos(t-nt+x)+4\tan^2(x)\cos x\cos(t-nt+x)=0
		$$
		\\
		Coefficient at $n$ reads:
		\\
		$$
		(\cos(nt)\cos t+\sin(nt)\sin t-\cos (t+2x)\cos(nt)-\sin(t+2x)\sin(nt))+
		$$
		$$
		\tan(x)(\sin(nt)\cos t-\cos(nt)\sin t+\cos (t+2x)\sin(nt)-\sin(t+2x)\cos(nt))=
		$$
		$$
		(\cos(t-nt)-\cos(t-nt+2x)-\tan(x)(\sin(t-nt)+\sin(t-nt+2x))=
		$$
		$$
		2\sin x\sin(t-nt+x)-2\tan x\cos x\sin(t-nt+x)=0
		$$
		\\
		The coefficient at $\varepsilon^2$ in  $P_1+P_2\tan(x)$ is:
		\\
		$$
		n\Big(\frac{-2n\sin(nt)\tan(x)+\cos(nt)(1-n^2\tan^2(x))}{1+n^2\tan^2(x)}\Big) 
		$$
		$$
		\Big(\cos(nt)-n\sin(nt)\tan(x)-\frac{\cos(nt)-2n\sin(nt)\tan(x)}{1+n^2\tan^2(x)}+\frac{n^2\cos(nt)\tan^2(x)}{1+n^2\tan^2(x)}\Big)+
		$$
		$$
		n\Big(\frac{2n\cos(nt)\tan(x)+\sin(nt)(1-n^2\tan^2(x))}{1+n^2\tan^2(x)}\Big)
		$$
		$$
		\Big(\sin(nt)+n\cos(nt)\tan(x)-\frac{\sin(nt)+2n\cos(nt)\tan(x)}{1+n^2\tan^2(x)}+
		\frac{n^2\sin(nt)\tan^2(x)}{1+n^2\tan^2(x)}\Big)=
		$$
		$$
		n\Big(\frac{-2n\sin(nt)\tan(x)+\cos(nt)(1-n^2\tan^2(x))}{1+n^2\tan^2(x)}\Big)
		$$
		$$
		\Big(\frac{n\sin(nt)\tan(x)+2n^2\cos(nt)\tan^2(x)-n^3\sin(nt)\tan^3(x)}{1+n^2\tan^2(x)}\Big)+
		$$
		$$
		n\Big(\frac{2n\cos(nt)\tan(x)+\sin(nt)(1-n^2\tan^2(x))}{1+n^2\tan^2(x)}\Big)
		$$
		$$
		\Big(\frac{-n\cos(nt)\tan(x)+2n^2\sin(nt)\tan^2(x)+n^3\cos(nt)\tan^3(x)}{1+n^2\tan^2(x)}\Big)=
		$$
		$$
		n\Big(\frac{-2n\sin(nt)\tan(x)+\cos(nt)(1-n^2\tan^2(x))}{1+n^2\tan^2(x)}\Big)
		$$
		$$
		\Big(n\tan(x)\frac{\sin(nt)(1-n^2\tan^2(x))+2n\cos(nt)\tan(x)}{1+n^2\tan^2(x)}\Big)+
		$$
		$$
		n\Big(\frac{2n\cos(nt)\tan(x)+\sin(nt)(1-n^2\tan^2(x))}{1+n^2\tan^2(x)}\Big)
		$$
		$$
		\ \ \ \ \Big(n\tan(x)\frac{2n\sin(nt)\tan(x)-\cos(nt)(1-n^2\tan^2(x))}{1+n^2\tan^2(x)}\Big)=0
		$$
		Lemma is proved.
	\end{proof}
This completes the proof of Theorem \ref{main1}.

\section{\bf Total integrability test}We consider the annulus between two neighboring invariant curves corresponding to two solutions $x_1,x_2$ of the equation (\ref{gutkin}).

\begin{figure}[h]
	\centering
	\includegraphics[width=10cm]{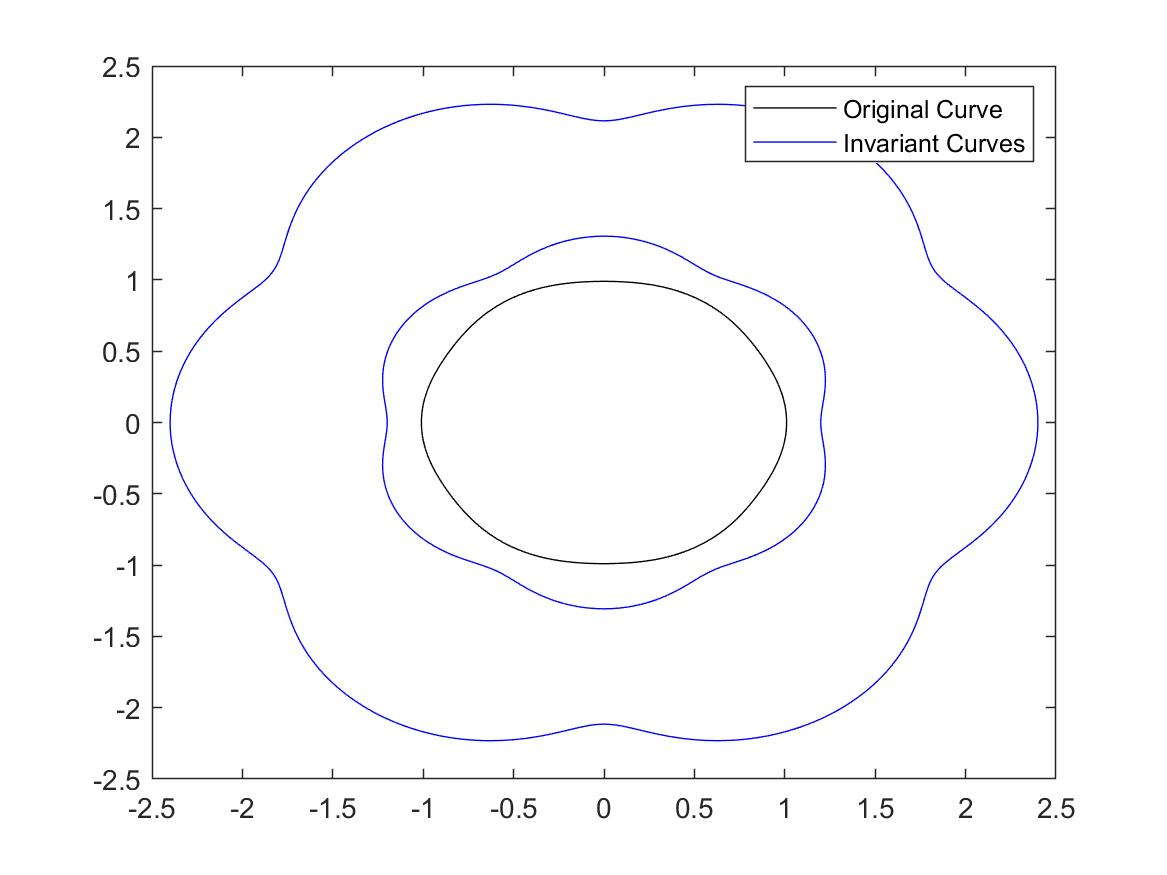}
	\caption{Two invariant curves bounding the domain $\Omega_{x_1x_2}$}
	\label{fig:TwoInvariantCurves}
\end{figure}

 We denote by $\Omega_{x_1x_2}$ the domain between these two invariant curves. We want to apply total integrability test as it was developed in \cite{B}, \cite {B0}.
Theory tells that total integrability of the map $T$ in the domain $\Omega_{x_1x_2}$ would imply the inequality
\begin{equation}\label{test}
\int_{\Omega_{x_1x_2}}(S_{11}+2S_{12}+S_{22})\ d(Area)\geq 0,
\end{equation}
where the Area form can be computed in terms of $S$ as
$$
d(Area)= -S_{12}\ dtds.
$$
Here we put "$-$" sign due to the twist condition (\ref*{twist}).
The domain of integration in coordinates $(t,s)$ looks by Theorem \ref{main1} as follows:
$$
\Pi=\{t\in[0, 2\pi],\  (s-t)\in [2x_1, 2x_2]\}.
$$
Thus altogether we get that the total integrability would imply:
\begin{equation}\label{test1}
I:=\int_{\Pi}(S_{11}+2S_{12}+S_{22})\ S_{12}\ dtds\leq 0.
\end{equation}

On the other hand we have the following experimental:
\begin{theorem}[Experimental]
	For  $n=7, \epsilon=0.01$ the solutions of equation \ref{gutkin} in the interval $(0,\pi/2)$, where are  $x_1 = 0.646471$ and $x_2 = 1.111932$ and the integral
	$$
	I=9.16345
	$$
\end{theorem}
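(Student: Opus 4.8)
The plan is to reduce the stated quantities to explicit elementary functions and then evaluate them by reliable numerical quadrature, since the claim is by nature a finite computation whose only content is the sign and the value of $I$.

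\emph{Roots.} First I would study $f(x)=\tan(7x)-7\tan x$ on $(0,\pi/2)$, i.e. the zeros of (\ref{gutkin}) for $n=7$. The function $\tan(7x)$ has poles at $x=\pi/14,3\pi/14,5\pi/14$, dividing the interval into branches on each of which $\tan(7x)$ increases from $-\infty$ to $+\infty$, while $7\tan x$ is smooth and increasing. A sign and monotonicity analysis on each branch pins down the number of zeros (here two) and localizes each one; I would then refine $x_1,x_2$ to the quoted precision by bisection or Newton's method, confirming $x_1\approx 0.646471$ and $x_2\approx 1.111932$.

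\emph{The integrand.} From (\ref{mu}) the quantities $\lambda=\lambda(t,s)$ and $\mu=\mu(t,s)$ are explicit ratios of determinants built from $\gamma(t),\gamma(s)$ and their derivatives, hence explicit trigonometric functions of $(t,s)$ once $\gamma$ from (\ref{gam}) is substituted. Combining with (\ref{s1s2}), namely $S_1=-[\dot\gamma(t),\ddot\gamma(t)]\lambda^2/2$ and $S_2=[\dot\gamma(s),\ddot\gamma(s)]\mu^2/2$, and differentiating once more while treating $\lambda,\mu$ as functions of both variables, yields $S_{11},S_{12},S_{22}$ in closed form; the cross derivative reads $S_{12}=-[\dot\gamma(t),\ddot\gamma(t)]\,\lambda\,\lambda_s$, and I would use the identity $S_{12}=S_{21}$ as an internal consistency check on the differentiation. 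The twist condition (\ref{twist}) guarantees $S_{12}<0$, fixing the sign conventions.

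\emph{Integration.} On $\Pi=\{t\in[0,2\pi],\ s-t\in[2x_1,2x_2]\}$ I would change variables to $(t,w)$ with $w=s-t$. The curve (\ref{gam}) satisfies $\gamma\!\left(t+\tfrac{2\pi}{n-1}\right)=R_{2\pi/(n-1)}\,\gamma(t)$, since both harmonics rotate by the same angle; as the area functional is rotation invariant, the integrand $(S_{11}+2S_{12}+S_{22})S_{12}$ is periodic in $t$ with period $2\pi/(n-1)$, so I may replace $\int_0^{2\pi}dt$ by $(n-1)\int_0^{2\pi/(n-1)}dt$, cutting the cost and the quadrature error. On $\Pi$ one has $s-t\in[2x_1,2x_2]\subset(0,\pi)$, so $P_2=[\dot\gamma(t),\dot\gamma(s)]$ stays bounded away from zero for small $\epsilon$ and the integrand is smooth; there is no singularity to handle, and a tensor-product Gauss or adaptive rule in $(t,w)$ applies directly.

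The main obstacle is not conceptual but one of reliability. I expect the real work to be, first, carrying out the second differentiation of $S$ without error, best done in a computer algebra system and cross-checked via $S_{12}=S_{21}$; and second, certifying the quadrature to enough digits to trust the value $I=9.16345$, by refining the mesh until the digits stabilize and by using the rotational symmetry above as an independent check. The qualitative conclusion $I>0$, which contradicts (\ref{test1}) and thereby rules out total integrability in $\Omega_{x_1x_2}$, is robust under small changes of $\epsilon$ and of the quadrature, and this robustness is what makes the numerical statement meaningful.
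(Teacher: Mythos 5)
Your proposal is correct and takes essentially the same approach as the paper: the authors likewise observe that the domain $\Pi$ and the derivatives of $S$ are explicit via (\ref{gam}) and (\ref{mu}), state that they could not evaluate the integral in closed form, and compute $I$ by numerical quadrature (in MATHEMATICA). Your added refinements---localizing the roots of (\ref{gutkin}) by branch analysis, exploiting the rotational symmetry $\gamma(t+2\pi/(n-1))=R_{2\pi/(n-1)}\gamma(t)$, and checking $S_{12}=S_{21}$---are sound practical safeguards but do not constitute a different method.
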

{\it "Proof":}
This integral is explicit since the domain $\Pi$ is explicit and the derivatives of $S$ can be explicitly computed using the formulas (\ref{gam}),(\ref*{mu}). We couldn't however compute this integral exactly. MATHEMATICA software was used to evaluate it numerically.

\section{\bf Some periodic points}The curve $\gamma$ admits large number of symmetries. 
Therefore some periodic points for the outer billiard $T$ can be found explicitly as follows:
Take the following points on $\gamma$  
 $$P_k=\gamma( t_k);\quad t_k=\frac{2\pi k}{n-1};\quad k=0,1...,n-1,$$
 
  $$Q_k=\gamma( s_k);\quad s_k=\frac{\pi}{n-1}+\frac{2\pi k}{n-1};\quad k=0,1...,n-1.$$
 These points are staying on the maximal and minimal distance from the origin respectively.
 One can see that the points $E_k, H_k$ which are the intersection points of the  tangent lines to $\gamma$ at $P_k$ and $P_{k+2}$ and $Q_k, Q_{k+2}$ respectively are periodic points of $T$.
 On the Fig. \ref{fig:Full} one clearly see the points $E_k$ corresponding to six large elliptic islands and $H_k$ six hyperbolic periodic points between them. 
 
\begin{figure}[H]
	\centering
	\includegraphics[width=17cm]{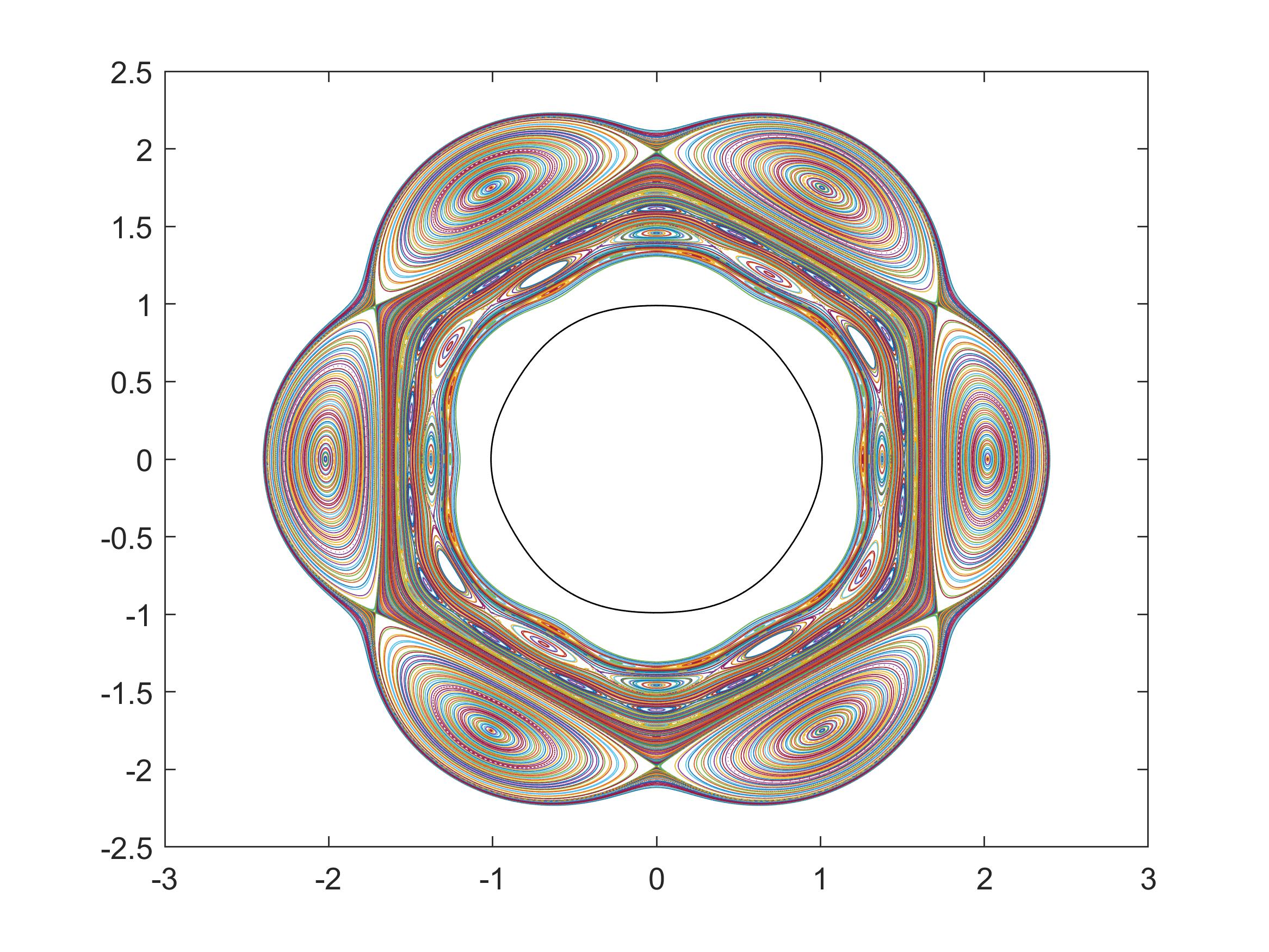}
	\caption{10,000 iterations of 500 initial points}
	\label{fig:Full}
\end{figure}

\section{\bf Pictures of Computer simulations}
In this section we show the results of computer simulations for the phase portrait of the mapping $T$ for $n=7, \epsilon=0.01$. They demonstrate clearly that there are contractible invariant curves (the islands) around elliptic periodic points. At the first stage of the computations there was an impression that the mapping $T$ could be integrable.

Later, on magnifying the picture between KAM curves and especially near the hyperbolic periodic points $H_k$, we found chaotic regions.
We believe that the map $T $ gives a very important geometric example 
of coexistence of regular and chaotic behaviors (we refer to an important survey papers  \cite{Pesin}  \cite{Strelcyn} on the coexistence problem).

\bigskip
\begin{figure}[H]
	\centering
	\includegraphics[width=16cm]{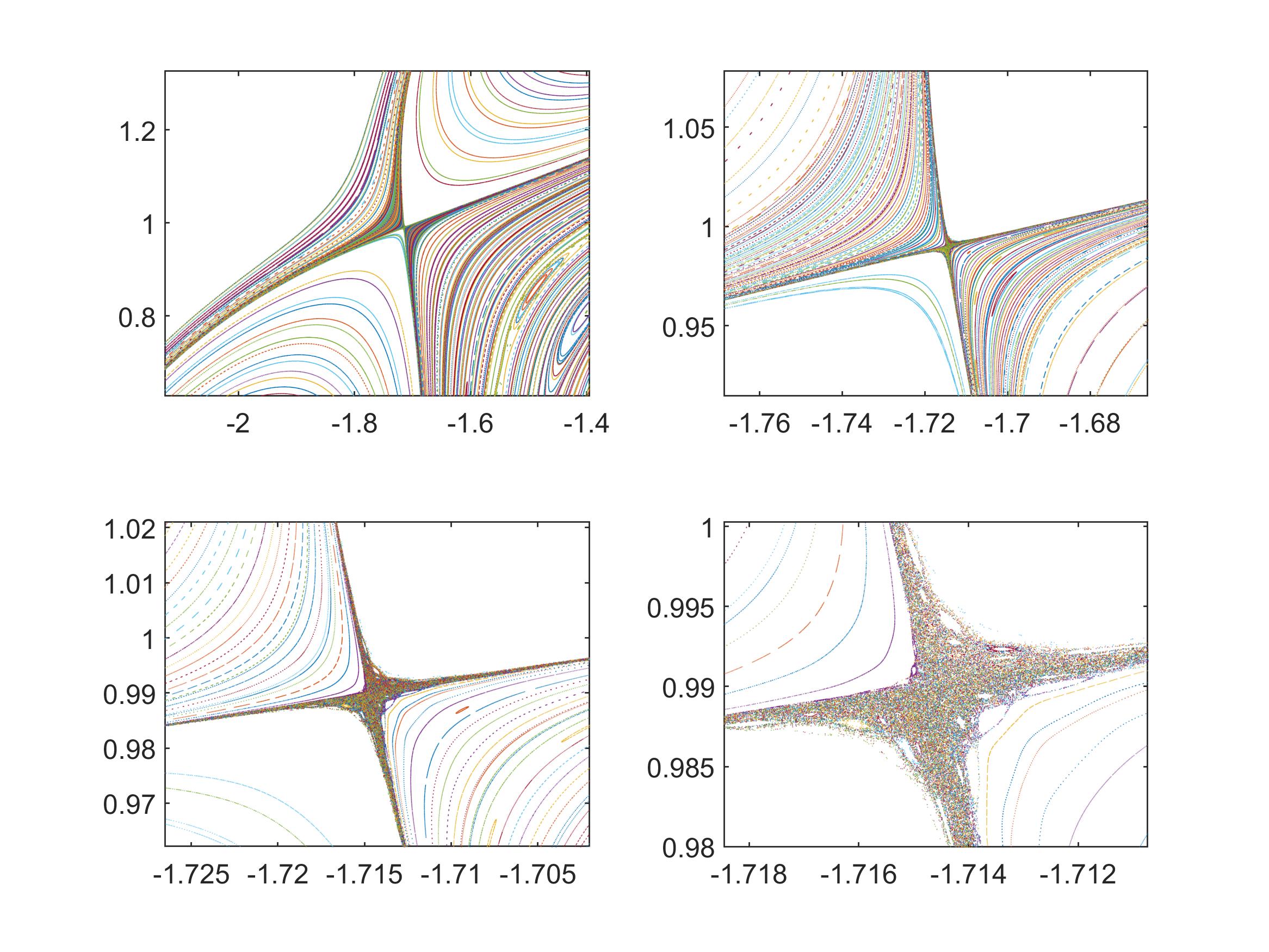}
	\caption{Magnifying a neighborhood of hyperbolic periodic point}
	\label{fig:Mag1}
\end{figure}
\section*{\bf Acknowledgments}This work was started during the XXXVII Workshop on Geometric Methods in Physics, BIAŁOWIEŻA, POLAND. We would like to thank the organizers for this opportunity.
%\newpage 

\end{document}